\newif\iftrop
\newif\ifintro
\newtheorem{theorem}{Theorem}
\newtheorem{corollary}{Corollary}
\newtheorem{remark}{Remark}
\newcommand{\A}{\mathcal{A}}
\newcommand{\G}{\mathcal{G}}
\newcommand{\W}{\mathcal{W}}
\renewcommand{\P}{P}
\newcommand{\Q}{Q}
\newcommand{\Qs}{\mathcal{Q}}
\renewcommand{\S}{S}
\newcommand{\T}{T}
\newcommand{\Tsel}{T^{\text{sel}}}
\newcommand{\R}{\mathbb{R}}
\newcommand{\Sn}{\mathcal{S}_n}
\newcommand{\Snp}{\mathcal{S}_n^+}
\newcommand{\Snpp}{\mathcal{S}_{np}^+}
\newcommand{\D}{\mathcal{D}}
\newcommand{\M}{\Sigma}
\newcommand{\U}{\mathcal{U}}
\newcommand{\ub}[1]{#1^{\uparrow}}
\newcommand{\mub}{\bigvee}
\DeclareMathOperator{\ricc}{ricc}
\DeclareMathOperator{\trace}{trace}
\DeclareMathOperator{\diag}{diag}
\DeclareMathOperator{\tr}{tr}
\newcommand{\rhot}{\widehat{\rho}}
\title{
Tropical Kraus maps for optimal control of switched systems
}
\author{St\'ephane Gaubert and Nikolas Stott
}
\begin{document}



\begin{abstract}
Kraus maps (completely positive trace preserving maps) arise classically in quantum information, as they describe the evolution of non-commutative probability measures. We introduce tropical analogues of Kraus maps, obtained by replacing the addition of positive semidefinite matrices by a multivalued supremum with respect to the L\"owner order. We show that non-linear eigenvectors of tropical Kraus maps determine piece-wise quadratic approximations of the value functions of switched optimal control problems. This leads to a new approximation method, which we illustrate by two applications: 1) approximating the joint spectral radius, 2) computing approximate solutions of Hamilton-Jacobi PDE arising from a class of switched linear quadratic problems studied previously by McEneaney. We report numerical experiments, indicating a major improvement in terms of scalability by comparison with earlier numerical schemes, owing to the "LMI-free" nature of our method.
\end{abstract}
\thanks{The authors were partially supported by the ANR projects {CAFEIN, ANR-12-INSE-0007} and MALTHY, ANR-13-INSE-0003, by ICODE and by the academic research chair ``Complex Systems Engineering'' of \'Ecole polytechnique - THALES - FX - DGA - DASSAULT AVIATION - DCNS Research - ENSTA ParisTech - T{\'e}l{\'e}com ParisTech - Fondation ParisTech - FDO ENSTA and by the PGMO programme of EDF and FMJH.}
\thanks{St\'ephane Gaubert and Nikolas Stott are with INRIA and CMAP, \'Ecole polytechnique, CNRS, France,        {\tt\small firstname.lastname@inria.fr}}%

\maketitle

\section{INTRODUCTION}

\subsection{Curse of dimensionality attenuation methods}

Dynamic programming is one of the main methods to solve optimal control problems. It characterizes the value function as the solution of a functional equation
or of a Hamilton-Jacobi partial differential equation. It provides a feedback law that is guaranteed to be globally optimal. However, it is subject to the ``curse of dimensionality''. Indeed, the main numerical methods,
including monotone finite difference
or semi-Lagrangean schemes~\cite{crandall-lions,capuzzodolcetta,falcone-ferretti,carlini-falcone-ferretti},
and the anti-diffusive schemes~\cite{zidani-bokanowski},
are grid-based.
It follows that the time needed to obtain an approximate solution with a given accuracy is exponential in the dimension of the state space.

Recently, two numerical methods have been shown, theoretically or practically, to attenuate the curse of dimensionality, for specific classes of optimal control problems with switches.

McEneaney considered hybrid optimal control problems in which a discrete
control allows one to switch between different linear quadratic
models. The method he developed~\cite{mceneaney07}
approximates the value function by a supremum
of elementary functions like quadratic forms, hence
it belongs to the family of ``max-plus basis methods''~\cite{a5,a6}. 
The method of~\cite{mceneaney07} has a remarkable feature:
it attenuates the curse of dimensionality, as shown by the complexity
estimates of Kluberg and McEneaney~\cite{mccomplex} and of Qu~\cite{qusico}.
McEaneney's method~\cite{mceneaney07} has been studied and extended in a series of works~\cite{eneaneyphys,qucdc,kaisemceneaney,mceneaneydower}. 

A different problem consists in 
computing the joint spectral radius of a finite set of matrices~\cite{Jun09}.
This can be formulated as an ergodic optimal control problem
for a switched system. In this
context, the ergodic value function is known 
as the Barabanov norm. Specific numerical
methods have been developed, which approximate the Barabanov ball by a polytope~\cite{Guglielmi2014}, or are of semi-Lagrangean type~\cite{Kozyakin10}.
Ahmadi et al.~\cite{pathcomplete} developed
a new method, based on a path complete automaton.
It approximates the Barabanov norm by a supremum of quadratic norms. 
Whereas the worst case complexity
estimates in~\cite{pathcomplete} are still subject to a curse of dimensionality,  in practice, the efficiency of the method is determined
by the complexity of the optimal switching law rather than 
by the dimension itself.
It allows one to solve instances of dimension inaccessible by grid-based method.

The method of McEneaney~\cite{mceneaney07}, like the one of 
Ahmadi et al.~\cite{pathcomplete},
provide max-plus basis expansions
of approximate value functions. In both methods, semidefinite programming (solution of LMI, linear matrix inequalities) is the bottleneck. Indeed, LMI arise in the ``pruning step'', i.e., the elimination of redundant quadratic forms,
an essential ingredient of McEaneney's method. It was observed
in~\cite{qucdc} that 99\% of the CPU time was spent in the solution
of semidefinite programs. The method of Ahmadi et al.\ involves
a truncation method, considering switching sequences of a fixed length,
and it requires
the solution of a semidefinite program whose size
is exponential in this length.






\subsection{Contribution}
In this paper, we introduce a new approximation method for optimal control
of switched systems. This method still relies on the approximation
of the value function by a supremum of quadratic forms. However, it avoids
the recourse to LMI. We exploit the geometric properties
of the space of positive semidefinite matrices equipped
with the L\"owner order. A key ingredient is the introduction
of the tropical analogues of the Kraus maps arising in quantum information
and control~\cite{sepulchre}. 
The latter 
are quantum Markov operators, describing
the evolution of density matrices (the quantum analogues
of probability measures). 
They act on the space of positive semidefinite matrices, and can be written as $T(X)=\sum_i A_i XA_i^\dag$, where $(\cdot)^\dag$ denotes the adjoint
of a matrix. Tropical Kraus maps are defined
by replacing the sum in the definition of $T(X)$ by a multivalued
operator, providing the set of minimal upper bounds with respect to the L\"owner order. 
Tropical Kraus maps may be thought of as ``1-player''
non-commutative dynamic programming operators
(classical Kraus maps correspond to the 0-player case).
We show that every non-linear eigenvector of the tropical Kraus
map yields an approximation of the value function. Moreover,
the non-linear eigenvalue yields an upper bound for the joint
spectral radius. We show that a non-linear eigenvector
does exist. We compute non-linear eigenvectors through
an iterative scheme,
in which at each stage, a specific minimal upper bound
of a collection of matrices is selected. The latter operation is implemented
in an algebraic way, leading to a fast algorithm (not relying on semidefinite
programming).  We report numerical experiments, showing a major speedup,
allowing  us to treat instances of dimension inaccessible by earlier dynamic programming methods.


The paper is organized as follows. In Section~\ref{sec-2}, we recall
the definitions of the joint spectral radius and switched linear quadratic 
control problems, which will serve as benchmarks. In Section~\ref{sec:mub},
we recall some basic properties of the L\"owner order on the
space of positive semidefinite matrices.  In Section~\ref{sec-kraus},
we introduce tropical Kraus maps, establish the existence
of non-linear eigenvectors, and present the iterative scheme.
In Section~\ref{sec-exp}, we present numerical experiments.






\section{CLASSES OF SWITCHED SYSTEMS}
\label{sec-2}
In this section, we describe the two optimal control problems to which
we will apply our method.
\subsection{Joint spectral radius}
\label{sec:clas_jsr}

We consider here the stability under arbitrary switching of discrete-time linear switched systems
as studied in e.g. \cite{Liberzon03,SunGe11,branicky1998multiple}. 

Let $ \A = \{ A_1, \dots, A_m \}$ be a set of real $n \times n$ matrices.
A discrete-time switched linear system is described by: 
\begin{align*}
x_{k+1} = A_{\sigma(k)} x_k , \; \sigma(k) \in\{ 1, \dots, m \}
\end{align*}
where $x_k \in \R^n$ denotes the trajectory of the system, and $\sigma$ 
is the switching mechanism, which selects one of the matrices
in $\A$ at each instant.

We are interested in the approximation of the {\em joint spectral radius} \cite{Jun09} 
associated to this set. The latter is defined by
\begin{align*}
\rho(\A)  &\coloneqq \lim_{k \rightarrow +\infty} \max_{1\leq i_1,\dots,i_k\leq m}
\| A_{i_1}\dots A_{i_k}\|^{1/k} \enspace .
\end{align*}

A fundamental result of Barabanov~\cite{Barabanov88}
shows that if $\A$ is irreducible,
meaning that there is no nontrivial subspace of $\R^n$ that is left
invariant by every matrix in $\A$, then there is a norm $v$ on $\R^n$ such
that 
\[
\lambda v(x) = \max_{1\leq i\leq m} v(A_i x) ,\qquad \forall x\in \R^n  \enspace ,
\]
for some $\lambda>0$. 
The scalar $\lambda$ is unique and it coincides with the joint spectral radius
$\rho(\A)$.
This shows that, when $\A$ is irreducible, all the trajectories of the switched linear system converge to zero if and only if $\rho < 1$.

The norm $v$ is known as a \emph{Barabanov norm}.
A norm which satisfies the inequality $\max_i v(A_i x) \leq \rho(\A) v(x)$ for all $x\in \R^n$ is called an \emph{extremal norm}.

Extremal norms and Barabanov norms are generally non unique and  cannot be computed exactly,
except in special examples.
Hence, we shall be content
with an {\em approximate extremal norm} $v$, i.e., a solution
of 
\[  \max_{1\leq i\leq m} v(A_i x) \leq \mu v(x) , \qquad \forall x\in \R^n \,,
\]
where $\mu> 0 $. Then, it is readily seen that $\mu \geq \rho(\A)$,
so that an approximate extremal norm yields a guaranteed approximation
of the joint spectral radius.

For instance, an approximate extremal norm can be obtained via an \emph{ellipsoidal norm}, as described in~\cite{ACM}, meaning that there is some positive definite matrix $Q$ and a real $\mu$ such that
\begin{align*}
A_i^\top Q A_i \preceq \mu^2 Q \;\forall i \,,
\end{align*}
where $\preceq$ denotes the L\"owner order (see~\Cref{sec:mub}) and $(\cdot)^\top$ denotes the transpose.
The approximation is usually coarse, since $1 \leq \mu/\rho(\A) \leq \sqrt{n}$ is a tight estimate.
This approach can be refined by \emph{lifting} the set of matrices $\A$ into another set $\A'$ which has the same joint spectral radius, but where the quadratic form $Q$ gives a better approximation of an extremal norm~\cite{pathcomplete}. We adopt such an approach in this paper, see~\Cref{rem:trop_rem} in~\Cref{sec:trop_kraus_def}.

\subsection{Linear Quadratic Optimal Control Problems with Switches}
\label{sec:clas_hjb}

We also consider the following problem of optimal switching between linear quadratic models, studied by McEneaney~\cite{mceneaney07}, namely
approximating the value function $V$ of an optimal control problem having both a control $u$ taking values in $\R^p$ and a discrete control (switches between different modes) $\mu$ taking values in $\Sigma \coloneqq \{ 1, \dots, m \}$:
\begin{align*}
V(x) = \sup_{u \in \U} \sup_{\mu \in \D} \sup_{t > 0} \int_0^t \frac{1}{2} \xi(s)^\top  D^{\mu(s)} \xi(s) - \frac{\gamma^2}{2} |u(s)|^2 \,ds \,.
\end{align*}
Here, $\D$ denotes the set of measurable functions from $[0,+\infty)$ to $\M$ (i.e.\ switching functions), $\U \coloneqq L^2([0,+\infty), \R^p)$ is 
the space of $\R^p$-valued control functions, 
and the state $\xi$ is subject 
to
\begin{align*}
\dot{\xi}(s) = A^\sigma \xi(s) + B^\sigma u(s) \,,\quad \xi(0) = x \,,
\end{align*}
where $\sigma=\mu(s)$ denotes the mode that is selected at time $s$.

It is known~\cite{mceneaney07} that, under some assumptions on the parameters, the value function $V$ takes finite values and is the unique viscosity solution of the stationary Hamilton-Jacobi-Bellman PDE:
\begin{align*}
H(x,\nabla V) = 0\,, \quad x \in \R^n 
\enspace .
\end{align*}
The Hamiltonian $H(x,p)$ in the latter equation is the point-wise maximum of simpler Hamiltonians $H^\sigma(x,p)$ given for $\sigma \in \Sigma$ by
\begin{align*}
H^\sigma(x,p) = ( A^\sigma x)^\top p + \frac{1}{2} x^\top D^\sigma x + \frac{1}{2} p^\top Q^\sigma p \,,
\end{align*}
and $Q^\sigma =\gamma^{-2} B^\sigma ( B^\sigma)^\top$.


We associate with this problem the \emph{Lax-Oleinik} semi-group $\{S_t\}_{t \geq 0}$ defined by
\begin{align*}
\begin{split}
S_t[V^0](x) = \sup_{u \in \U} \sup_{\mu \in \D} & \int_0^t \frac{1}{2} \xi(s)^\top D^{\mu(s)} \xi(s) 
\\
& - \frac{\gamma^2}{2} |u(s)|^2 \,ds + V^0\big(\xi(t)\big) \,.
\end{split}
\end{align*}

McEneaney showed in~\cite{mceneaney07} that $V(x)$ coincides with $\lim_{t \to +\infty} S_t[V^0](x)$ and that the latter limit is uniform on compact sets
if $V^0$ satisfies a quadratic growth condition (one requires that $\epsilon |x|^2 \leq V^0(x) \leq \lambda |x|^2$ for some positive constants $\epsilon, \lambda$ that are determined from the parameters).

We also associate to every value $\sigma\in \Sigma$
the semi-group $\{ S_t^\sigma \}_{t \geq 0}$ 
corresponding to the unswitched control problem obtained
by setting $\mu(s) \equiv \sigma$, i.e., 
\begin{align*}
\begin{split}
S_t^\sigma[V^0](x) = \sup_{u \in \U} & \int_0^t \frac{1}{2} \xi(s)^\top D^{\sigma} \xi(s) 
\\
& - \frac{\gamma^2}{2} |u(s)|^2 \,ds + V^0\big(\xi(t)\big) \, . 
\end{split}
\end{align*}
%

Computing
$S_\tau^\sigma[V^0]$ 
when $V^0(x) = x^\top P_0 x$,
reduces to solving the following indefinite Riccati differential equation,
 \begin{align*}
 \dot{P} = (A^\sigma)^\top P + PA^\sigma + PQ^\sigma P + D^\sigma \,,\quad P(0) = P_0 \,,
 \end{align*}
with $P(s)\in \Sn$. 
Indeed, 
we have $S_\tau^\sigma[V^0](x)=
x^\top P(\tau) x$.
We denote by $\ricc_{\tau,\sigma}$ the flow of this equation, so that
$\ricc_{\tau,\sigma} [P_0] \coloneqq P(\tau)$. 



\section{Minimal upper bounds in the Loewner order}
\label{sec:mub}

We begin by recalling some standard notation and definitions.
We denote by $\wp(X)$ the powerset of a set $X$.
We denote by 
$\Sn$ the space of symmetric matrices,
which
is equipped with the Frobenius scalar product defined by $\langle P, Q\rangle = \trace(PQ)$.
The product space $(\Sn)^p$ is equipped with the scalar product $ \langle (P_k)_k , (Q_k)_k \rangle = \sum \langle P_k, Q_k \rangle $.
The $n \times n$ identity matrix is denoted by $I_n$ and we use the shorthand $I_n^p$ to mean the $p$-tuple $(I_n, \dots, I_n)$.

 A symmetric matrix $\P$ is \emph{positive semidefinite} when the quadratic form $x^\top Px = \sum_{i,j} \P_{i,j}x_ix_j $ takes non-negative values for all vectors $x \in \R^n$, or equivalently when all the eigenvalues of $\P$ are non-negative. Then, we write $\P \succeq 0$. The set of positive semidefinite matrices is denoted by $\Snp$.
When $x^\top\P x$ is positive for all nonzero vectors $x \in \R^n$, we say that the matrix $\P$ is \emph{positive definite}.
The set of positive semidefinite matrices constitutes a convex cone in $\Sn$, meaning that $\lambda \P + \mu \Q \in \Snp$ for all $\P,\Q \in \Snp$ and non-negative $\lambda, \mu$. It is also closed and pointed ($\Snp \cap - \Snp = \{ 0 \}$), thus it defines an order relation on $\Sn$ by
\begin{align*}
\P \preceq \Q \iff \Q-\P \in \Snp \iff \Q-\P \succeq 0 \,.
\end{align*}
The partial order $\preceq$ is called the L\"owner order.

A classical result by Kadison~\cite{Kadison} shows that the set $\Sn$ equipped with this order constitutes an \emph{antilattice}, meaning that two matrices $\P,\Q \in \Sn$ have a supremum (least upper bound) if and only if they are comparable, meaning that $P\succeq Q$ or $Q\succeq P$.
If  the matrices $\P,\Q$ are not comparable, then they possess a continuum of \emph{minimal upper bounds}, i.e. upper bounds $\S$ such that $\P,\Q \preceq X \preceq \S$ implies $X = \S$, see~\cite{PAMS} for more information. 

Given a finite set of symmetric matrices $\Qs$, we denote by $\ub{\Qs} \coloneqq \{ X \in \Sn \colon X \succeq Q_i \,,Q_i \in \Qs \}$ the set of upper bounds of the matrices in $\Qs$.
This set is convex as an intersection of convex sets.
We also denote by $\mub{\Qs}\in \wp(\Sn)$ the subset of $\ub{\Qs}$ consisting of all minimal upper bounds of $\Qs$.
We use the symbol $\mub$ 
to denote a ``supremum'' operation which is {\em multivalued} owing to the antilattice character of $\Sn$.
The set $\mub{\Qs}$ coincides with the set of \emph{positively exposed points} $\ub{\Qs}$:
\begin{theorem}
\label{thm:mub_char}
The matrix $X$ is a minimal upper bound of a finite set of matrices $\Qs$ if and only if there is a positive definite matrix $C$ such that $X$ 
minimizes the map
\begin{align*}
Z \mapsto \langle C, Z \rangle
\end{align*}
 over the set $\ub{\Qs}$.
The minimizer, denoted by $X_C$, is unique, and, when the set $\Qs$ consists of two matrices $\P,\Q$, 
\begin{align}
X_C = \frac{\P+\Q}{2} + \frac{1}{2} C^{-1/2} \,\big| C^{1/2} (\P-\Q) C^{1/2} \big| \, C^{-1/2} \,.\label{e-def-xc}
\end{align}
Here, $C^{1/2}$ denotes the unique positive definite solution to the equation $X^2 = C$ and $|X| = (XX^\top)^{1/2}$.
\end{theorem}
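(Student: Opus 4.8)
The plan is to treat the three assertions—the order-theoretic characterization of $\mub{\Qs}$ as the set of points exposed by positive-definite functionals, the uniqueness of the minimizer, and the closed form in the two-matrix case—by combining a monotonicity argument, semidefinite-programming duality, and an explicit verification. Throughout I would exploit that $\ub{\Qs} = \bigcap_i (Q_i + \Snp)$ is a closed convex set (a spectrahedron) whose recession cone is $\Snp$, and that Slater's condition holds for it, since any $Z$ with $Z - Q_i \succ 0$ for all $i$ (e.g.\ a large multiple of $I_n$) lies in the relative interior of every constraint.

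First I would establish that an exposed point is minimal, together with existence and uniqueness of the minimizer. The key elementary fact is that, for $C \succ 0$, the linear form $Z \mapsto \langle C, Z\rangle$ is \emph{strictly} monotone for the L\"owner order: if $0 \neq S \succeq 0$ then $\langle C, S\rangle = \trace(CS) > 0$. Coercivity of this form on $\ub{\Qs}$ (its recession cone being $\Snp$, on which the form is positive) guarantees that a minimizer $X_C$ exists, and strict monotonicity shows it is minimal, since any $Z \in \ub{\Qs}$ with $Z \preceq X_C$, $Z \neq X_C$ would satisfy $\langle C, Z\rangle < \langle C, X_C\rangle$. For uniqueness I would pass to the dual SDP, $\max \sum_i \langle Y_i, Q_i\rangle$ subject to $Y_i \succeq 0$ and $\sum_i Y_i = C$; Slater's condition yields strong duality and dual attainment, hence complementary slackness $Y_i(X_C - Q_i) = 0$. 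If $X, X'$ are two minimizers, then $0 = \langle C, X' - X\rangle = \sum_i \langle Y_i, X' - Q_i\rangle$ with each term $\geq 0$ forces $Y_i(X' - Q_i) = 0$ as well, whence $C(X - X') = \sum_i Y_i(X - X') = 0$; invertibility of $C$ gives $X = X'$.

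The converse—every minimal upper bound is exposed by some $C \succ 0$—is the crux. I would first prove the purely order-theoretic equivalence that $X \in \ub{\Qs}$ is minimal if and only if $\bigcap_i \operatorname{range}(X - Q_i) = \{0\}$. Indeed $X$ fails to be minimal exactly when there is $0 \neq H \succeq 0$ with $H \preceq X - Q_i$ for all $i$; such an $H$ forces $\operatorname{range} H \subseteq \bigcap_i \operatorname{range}(X - Q_i)$, and conversely any unit vector $v$ in this intersection yields an admissible $H = \varepsilon\, vv^\top$ for small $\varepsilon > 0$, because $v \in \operatorname{range}(X-Q_i)$ with $X - Q_i \succeq 0$ permits $X - Q_i - \varepsilon vv^\top \succeq 0$. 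Now I would build the witness: choosing $Y_i \succeq 0$ with $\operatorname{range} Y_i = \ker(X - Q_i)$ gives $Y_i(X - Q_i) = 0$, while $\ker(\sum_i Y_i) = \bigcap_i \operatorname{range}(X - Q_i) = \{0\}$, so that $C := \sum_i Y_i \succ 0$. This $(Y_i)$ is dual feasible and, together with the feasible $X$, satisfies the KKT/complementary-slackness conditions of the convex program; hence $X$ is its global minimizer, i.e.\ $X = X_C$.

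Finally the closed form for $\Qs = \{\P, \Q\}$. Writing $N := C^{1/2}(\P - \Q)C^{1/2}$, congruence by $C^{1/2}$ turns the claimed $X_C$ into $C^{1/2}\tfrac{\P+\Q}{2}C^{1/2} + \tfrac12|N|$, so that $C^{1/2}(X_C - \P)C^{1/2} = \tfrac12(|N| - N)$ and $C^{1/2}(X_C - \Q)C^{1/2} = \tfrac12(|N| + N)$, both positive semidefinite for every symmetric $N$; this shows $X_C \in \ub{\{\P,\Q\}}$. Optimality then follows from a dual certificate built from the spectral projectors $\Pi_+, \Pi_-, \Pi_0$ of $N$ onto its positive, negative, and zero eigenspaces: setting $Y_1 = C^{1/2}(\Pi_+ + \Pi_0)C^{1/2}$ and $Y_2 = C^{1/2}\Pi_- C^{1/2}$ gives $Y_1 + Y_2 = C$ and, since $(\Pi_+ + \Pi_0)(|N| - N) = 0$ and $\Pi_-(|N|+N) = 0$, the relations $Y_1(X_C - \P) = 0$ and $Y_2(X_C - \Q) = 0$. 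By the previous paragraph this certifies that $X_C$ is the unique minimizer. I expect the main obstacle to lie in the converse direction: rigorously passing from the abstract minimality of $X$ to a \emph{strictly} positive-definite exposing matrix, for which the range/kernel criterion and the constraint qualification underlying the normal-cone calculus are essential; the remaining steps, including the two-matrix formula, are then essentially verifications.
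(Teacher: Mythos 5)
Your proposal is correct and follows essentially the same route as the paper's (sketched) proof: the optimality/duality conditions of the associated semidefinite program for the exposure and uniqueness claims, the order-theoretic characterization of minimal upper bounds via the range condition $\bigcap_i \operatorname{range}(X-Q_i)=\{0\}$ (which the paper imports from \cite[Theorem 3.1]{PAMS}), and the congruence $X\mapsto C^{1/2}XC^{1/2}$ to reduce the two-matrix formula to a direct verification. You have simply filled in the details the paper defers to its reference and to the reader, including an explicit dual certificate for formula~\eqref{e-def-xc}, and all the steps check out.
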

\begin{skproof}
This is deduced from the optimality condition of the associated semidefinite program and from a generalization of the characterization of minimal upper bounds in~\cite[Theorem 3.1]{PAMS}.
When $C = I_n$, it can be checked that the matrix $X_C = P + |Q-P|$ satisfies the optimality conditions. The formula in the general case is obtained by a change of variable $X  \mapsto C^{1/2}XC^{1/2}$ and a symetrization in $P,Q$.
\end{skproof}

We say that the minimal upper bound $X_C$ is {\em selected} by the matrix $C$.
Note that
the expression of $X_C$ is similar to that of the maximum of two scalars: $\max(a,b) = ( a + b + |a-b| )/2$.
When more than two matrices are involved, 
minimal upper bounds can be computed by solving a semidefinite program.
By choosing $C = I_n$ in~\Cref{thm:mub_char}, 
we obtain a special minimal upper bound of $\Qs$,
denoted by  $ \sqcup_{\tr} \Qs$.

Finally, we point out a remarkable selection of a minimal upper bound of two positive semidefinite matrices. 
If all matrices in $Q_i \in \Qs$ are positive definite, a minimal upper bound is given by $(X^*)^{-1}$, where $X^*$ denotes the unique matrix that maximizes $\log \det X$ over all positive definite matrices $X$ such that $X \preceq Q_i^{-1}$ for all $i$. We denote this selection by $\sqcup_{\det} \Qs$.
This minimal upper bound corresponds to the minimum volume ellipsoid enclosing the ellipsoids $\{ x \in \R^n \colon x^\top Q_i^{-1}x \leq 1 \}$ and has thus received much attention across several fields, see~\cite{ACM} and references therein.
In particular, when the set $\Qs$ consists of two matrices $P,Q$, it has been shown in~\cite{ACM} that 
$\sqcup_{\det} \Qs$
can be obtained by selecting $C = P^{-1}$
(or $C=Q^{-1}$) in~\eqref{e-def-xc}.

\section{Tropical Kraus Maps}
\label{sec-kraus}
\subsection{Definitions}
\label{sec:trop_kraus_def}

In the sequel, we assume that we are given an index set $\M = \{ 1, \dots, m\}$, $m$ matrices $\A = \{ A_\sigma \}_{\sigma \in \Sigma}$.
We also assume that we are given a finite set $\W$ with $p$ elements and a map sending $\W \times \Sigma$ to $\W$, 
denoted by $(i,\sigma)\mapsto i\cdot \sigma $, for $i \in \W$ and $\sigma \in \Sigma$. We say that $(i,\sigma,j) \in \W \times \M \times \W$ is an {\em admissible transition} when $i \cdot \sigma = j$.

We now introduce the 
\emph{tropical Kraus map} $T$, 
from $(\Snp)^p$ to $(\wp(\Snp))^p$, 
whose $j$-th coordinate maps $X = (X_1,\dots,X_p) \in (\Snp)^p$ to
the subset of $\Snp$:
\begin{align*}
T_j (X) \coloneqq \mub \Big\{ A_\sigma^\top X_iA_\sigma \colon (i,\sigma) \in \W \times \M\,, i \cdot \sigma = j \Big\} \,.
\end{align*}


\begin{remark}
In the setting of quantum information~\cite{kraus}, a Kraus map is given by $X \mapsto \sum_i A_iXA_i^\dag $ and acts on the set of density matrices (positive semidefinite matrices of trace $1$).
We say that the map $T$ is a \emph{tropical Kraus map} since it ressembles the latter, except the sum has been replaced with the \enquote{supremum} operation $\bigvee$, and the matrices have been transposed. The transposition is not surprising: classical Kraus maps provide a forward propagation of density matrices, whereas we are interested in Lyapunov functions, whose propagation follows a backward scheme. I.e., the present tropical Kraus maps are analogues to the {\em adjoints} of classical Kraus maps. Finally, as we work with real quadratic forms,
instead of hermitian forms, the hermitian conjugate $\dag$ is replaced by transposition $^\top$.
\end{remark}

We also consider a variant of the tropical Kraus map, adapted to the optimal control problem in~\Cref{sec:clas_hjb}, defining the map $M_{\tau}$ from
$(\Snp)^p$ to $(\wp(\Snp))^p$ by: 
\begin{align*}
(M_{\tau})_j (X) \coloneqq \mub \Big\{ \ricc_{\tau, \sigma} X_i \colon (i,a) \in \W \times \M\,, i \cdot \sigma = j \Big\} \,.
\end{align*}


\iftrop

\begin{remark}
\label{rem:trop_rem}
A tropical Kraus map from $(\Snp)^p$ to $(\wp(\Snp))^p$ can be represented
by a new tropical Kraus map from $\Snpp$ to $\wp(\Snpp)$, preserving the set
of block-diagonal matrices.
Indeed, for $X = \diag( X_1, \dots, X_m)$, these maps can be written as
\begin{align*}
\begin{split}
\mub \Big\{ f_\alpha(X) &\colon \alpha =   (i,\sigma,j) \;, i \cdot \sigma = j \Big\} \\
&\text{with}\;f_\alpha(X) = A_\alpha^\top  X A_\alpha\,.
\end{split}
\end{align*}
where $A_\alpha = E_{ij} \otimes A_\sigma$ denotes the lifted set of matrices.
Here, $E_{ij}$ denotes the matrix with $1$ in the $(i,j)$-th entry
and $0$ everywhere else and $\otimes$ is the Kronecker product.
Indeed, whenever the selection matrix $C$ in~\Cref{thm:mub_char} is block-diagonal $C = (C_1, \dots, C_m)$, the associated minimal upper bound is also block diagonal and the value of the $j$-th block is exactly the minimal upper bound in $T_j(X)$ that is selected by $C_j$.
The case where the matrix $C$ is not block-diagonal does not appear in our analysis.
The same remark applies to the variant $M_\tau$ of the tropical Kraus map.
\end{remark}
\fi

\subsection{Non linear eigenvalue and fixed points problems associated to Tropical Kraus Maps}
%

The tropical Kraus map $T$ is positively homogeneous, meaning that $T(\alpha X) = \alpha T(X)$ for all $X \in \Snp$ and $\alpha \geq 0$. This suggests to consider a multivalued eigenproblem. A \emph{(non-linear) eigenvector} of $T$,
associated to the \emph{eigenvalue} $\lambda$ is a nonzero matrix $X\in \Snp$ such that $\lambda X_j \in T_j(X)$ holds for
all $j\in \W$. We write $\lambda X\in T(X)$ for brevity. This notation
is licit since we can identify $T(X)$ which is an element of $(\wp(\Sn))^p$ to an element of $\wp\big( (\Sn)^p \big)$.  


The following result shows that a non-linear eigenvalue of the tropical Kraus
map provides an upper bound for the joint spectral radius. 
\begin{theorem}
\label{thm:fix_jsr}
If the multivalued eigenvector problem $\lambda X \in \T(X)$ has a solution such that the matrix $\sum_{j\in \W} X_j$ is positive definite, then, the map 
\[
v(z) \coloneqq \sup_{j\in \W} (z^\top  X_j z)^{1/2}
\]
is a norm, and $v(A_\sigma z) \leq \sqrt{\lambda} v(z)$
holds for all $z\in \R^n$ and $\sigma\in \Sigma$. 
In particular, the joint spectral radius of $\A$ 
does not exceed $\sqrt{\lambda}$.
\end{theorem}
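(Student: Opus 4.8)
The plan is to extract from the eigenvector condition a family of L\"owner inequalities, convert them into pointwise inequalities between quadratic forms, and then read off both the norm property of $v$ and the contraction estimate.

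First I would unpack the hypothesis $\lambda X \in \T(X)$. By the definition of $T_j$, the matrix $\lambda X_j$ is a \emph{minimal} upper bound of $\{ A_\sigma^\top X_i A_\sigma \colon i\cdot\sigma = j \}$; in particular it is an upper bound, so that
\[
A_\sigma^\top X_i A_\sigma \preceq \lambda X_{i\cdot \sigma}, \qquad \forall\, (i,\sigma)\in \W\times\Sigma .
\]
It is worth stressing that only the upper-bound part of the eigenvector condition is used: minimality plays no role in this result. Evaluating this L\"owner inequality at a vector $z\in\R^n$ gives $(A_\sigma z)^\top X_i (A_\sigma z) \le \lambda\, z^\top X_{i\cdot\sigma} z \le \lambda\, v(z)^2$ for every $i\in\W$, since $i\cdot\sigma\in\W$. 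Taking the supremum over $i$ then yields $v(A_\sigma z)^2 \le \lambda\, v(z)^2$, i.e. the announced contraction $v(A_\sigma z)\le \sqrt{\lambda}\, v(z)$.

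Next I would verify that $v$ is genuinely a norm. Homogeneity $v(\alpha z)=|\alpha|\,v(z)$ is immediate from the defining formula. For the triangle inequality, each map $z\mapsto (z^\top X_j z)^{1/2}=\lVert X_j^{1/2} z\rVert$ is a seminorm because $X_j\succeq 0$, and a pointwise supremum of finitely many seminorms is again a seminorm. The one place where the extra hypothesis is indispensable is definiteness: if $v(z)=0$ then $z^\top X_j z=0$ for every $j$, hence $z^\top\big(\sum_{j\in\W} X_j\big) z=0$, and the assumed positive definiteness of $\sum_{j\in\W} X_j$ forces $z=0$.

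Finally, the bound on the joint spectral radius follows from the contraction together with the approximate extremal norm observation of \Cref{sec:clas_jsr}: since $v(A_\sigma z)\le \sqrt{\lambda}\,v(z)$ for all $\sigma$ and $z$, iterating along an arbitrary switching sequence gives $v(A_{i_1}\cdots A_{i_k} z)\le \lambda^{k/2}\, v(z)$, and the equivalence of the norm $v$ with the Euclidean norm converts this into $\lVert A_{i_1}\cdots A_{i_k}\rVert \le C\,\lambda^{k/2}$; taking $k$-th roots and letting $k\to\infty$ yields $\rho(\A)\le\sqrt{\lambda}$. As this argument shows, there is no real obstacle here beyond bookkeeping; the only delicate point is the definiteness of $v$, which is exactly what the hypothesis on $\sum_{j\in\W} X_j$ supplies, and the recognition that the upper-bound (rather than the full minimal upper bound) property already suffices.
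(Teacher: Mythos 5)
Your proof is correct. The paper itself disposes of this theorem in one line: it observes that a non-linear eigenvector of $T$ is a feasible point of the semidefinite program in Theorem~2.4 of Ahmadi et al.~\cite{pathcomplete} (with $\rho^2=\lambda$), and then invokes that theorem. Your argument is the self-contained version of exactly the same idea: the step where you discard minimality and keep only the upper-bound inequalities $A_\sigma^\top X_i A_\sigma \preceq \lambda X_{i\cdot\sigma}$ for all $(i,\sigma)\in\W\times\Sigma$ is precisely the statement that the graph $\G(X,\sqrt{\lambda})$ of \Cref{sec:pathcomplete} contains every admissible transition and is therefore path-complete (the transition map $(i,\sigma)\mapsto i\cdot\sigma$ being total). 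What you then do by hand --- the pointwise evaluation giving $v(A_\sigma z)\le\sqrt{\lambda}\,v(z)$, the seminorm/definiteness check, and the iteration plus norm-equivalence argument yielding $\rho(\A)\le\sqrt{\lambda}$ --- is a re-derivation of the relevant special case of the cited theorem. Your version buys independence from the external reference and makes explicit where the hypothesis on $\sum_{j\in\W}X_j$ enters (only in the definiteness of $v$), as well as the useful remark that minimality of the upper bound is not needed here; the paper's version buys brevity by outsourcing these details. One cosmetic point: the definiteness of $v$ is also what makes the norm-equivalence constant $C$ in your last step finite, so it is used twice, not once.
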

\begin{proof}
The proof of this result is similar to the proof Theorem 2.4 by Ahmadi et al.~\cite{pathcomplete}. Indeed, a non-linear eigenvector $X$ of the tropical Kraus map $T$ provides a feasible point of the semidefinite program considered in~\cite[Theorem 2.4]{pathcomplete}.
\end{proof}

We have the following analogous result for the switched linear quadratic
control problem.
\begin{theorem}
\label{thm:fix_hjb}
If the multivalued fixed point problem $X\in M_\tau(X)$ has a 
solution,
then the map $V \coloneqq z \mapsto \sup_i z^\top X_iz$ determines a 
sub-invariant function
of the Lax-Oleinik semi-group $S_t$, meaning that:
\begin{align*}
\max_\sigma S_t^\sigma[V](z) \leq V(z)\;\text{for all} \; z\,.
\end{align*}
\end{theorem}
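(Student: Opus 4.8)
The plan is to push each unswitched semi-group $S_\tau^\sigma$ through the supremum defining $V$, so that the desired inequality becomes a direct restatement of the upper-bound property built into the fixed-point equation. First I would use that $S_\tau^\sigma$ commutes with finite pointwise suprema of terminal costs. Concretely, writing $V_i(z) := z^\top X_i z$ so that $V = \sup_{i\in\W} V_i$, and using that for a fixed control $u$ the running-cost integral in the definition of $S_\tau^\sigma$ does not depend on the index $i$, the two suprema over controls and over $i$ may be exchanged, giving $S_\tau^\sigma[V] = \sup_{i\in\W} S_\tau^\sigma[V_i]$. Next I would use the Riccati identity recorded in \Cref{sec:clas_hjb}, namely $S_\tau^\sigma[V_i](z) = z^\top(\ricc_{\tau,\sigma} X_i)z$, which expresses the propagation of a quadratic terminal cost as the Riccati flow.

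Combining these two facts, for each mode $\sigma$ I obtain
\begin{align*}
S_\tau^\sigma[V](z) = \sup_{i \in \W} z^\top (\ricc_{\tau,\sigma} X_i) z \,.
\end{align*}
Now I bring in the hypothesis $X \in M_\tau(X)$: for every $j \in \W$ the matrix $X_j$ is a minimal upper bound, hence in particular an upper bound, of the set $\{\ricc_{\tau,\sigma} X_i : i \cdot \sigma = j\}$. Thus, for each index $i$, setting $j := i \cdot \sigma$ makes $(i,\sigma,j)$ an admissible transition, so that $\ricc_{\tau,\sigma} X_i \preceq X_j$ and therefore $z^\top(\ricc_{\tau,\sigma} X_i) z \leq z^\top X_j z \leq V(z)$. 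Taking the supremum over $i$ and then the maximum over $\sigma$ yields $\max_\sigma S_\tau^\sigma[V](z) \leq V(z)$, which is the asserted sub-invariance at the time step $t = \tau$ defining $M_\tau$; the inequality at integer multiples $t = k\tau$ then follows by iterating this bound, using that each $S_t^\sigma$ is monotone and obeys the semi-group property.

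The only delicate point I anticipate is the rigorous justification of the exchange of suprema in the first step, i.e. the identity $S_\tau^\sigma[\sup_i V_i] = \sup_i S_\tau^\sigma[V_i]$. It holds as an exact equality, not merely as a one-sided inequality, precisely because the running cost is common to all indices and only the terminal term carries $i$; the finiteness of the family $(X_i)_{i\in\W}$ then removes any attainment or measurability concern, so no limiting argument is required. Everything else is a purely order-theoretic transcription of the minimal-upper-bound inequality furnished by the fixed-point equation into the quadratic forms defining $V$, and involves no further computation.
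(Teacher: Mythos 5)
The paper states \Cref{thm:fix_hjb} without giving any proof (unlike \Cref{thm:fix_jsr}, which at least points to Ahmadi et al.), so there is no in-paper argument to compare against; your proof is the natural one and it is correct. The three ingredients all check out: the exchange $S_\tau^\sigma[\sup_i V_i]=\sup_i S_\tau^\sigma[V_i]$ is an exact identity because the terminal cost enters additively and the family is finite (both inequalities are immediate, one by exchanging suprema, the other by monotonicity of $S_\tau^\sigma$); the quadratic propagation $S_\tau^\sigma[V_i](z)=z^\top(\ricc_{\tau,\sigma}X_i)z$ is exactly what the paper records in \Cref{sec:clas_hjb}; and the fixed-point hypothesis delivers $\ricc_{\tau,\sigma}X_i\preceq X_{i\cdot\sigma}$ because a minimal upper bound is in particular an upper bound, which transcribes into $z^\top(\ricc_{\tau,\sigma}X_i)z\leq V(z)$ by the definition of the L\"owner order. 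The one point to flag is the range of $t$: the hypothesis $X\in M_\tau(X)$ only encodes information at the single time step $\tau$, so what you actually obtain is $\max_\sigma S_\tau^\sigma[V]\leq V$ and, by monotonicity and the semi-group property, the same bound at $t=k\tau$ for integer $k$; sub-invariance for arbitrary $t>0$ (or for the full switched semi-group $S_t$ rather than $\max_\sigma S_t^\sigma$) does not follow from this data alone. Since the theorem as written leaves $t$ unquantified, this is best read as an imprecision in the statement rather than a gap in your argument, but you are right to state explicitly that your conclusion is sub-invariance along the $\tau$-discretization.
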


\subsection{Existence of non-linear eigenvectors of tropical Kraus maps and computation by a Krasnoselskii-Mann iteration}

\label{sec:trop_iter}

For a completely positive map, $X\mapsto \sum_i A_i XA_i^\top $,  the existence
of a positive semidefinite eigenvector follows from the Perron-Frobenius
theorem~\cite{lemmensnussbaum}. Moreover, such an eigenvector is necessarily positive
definite as soon as the map is {\em irreducible} in the Perron-Frobenius sense, meaning that the map does not
leave invariant a non-trivial face of the closed cone $\Snp$.
As shown in~\cite{Farenick96}, the latter condition holds if and only
if the set of matrices $\{A_i\}$ is {\em irreducible} in the
algebraic sense, meaning that there is no non-trivial subspace
invariant by each matrix in this set.

In order to show that tropical Kraus maps have eigenvectors, we specialize the multivalued map $T$ defined in~\Cref{sec:trop_kraus_def} 
by fixing a selection of minimal upper bound $\sqcup$. We obtain the map $\Tsel$ defined on $(\Snp)^p$ by
\begin{align*}
\Tsel_j(X) \coloneqq
 \bigsqcup
  \Big\{ A_\sigma ^\top  X_i A_\sigma \colon i \cdot \sigma = j \Big\} \,.
\end{align*}
We will prove that the map $\Tsel$ 
has a non-linear eigenvector if the selection $\sqcup$ is the "minimum volume" selection $\sqcup_{\det}$.
We introduce the ``non-commutative simplex'' $\Delta_p \coloneqq \{ X \in (\Snp)^p \colon  \langle I_n^p ,X \rangle= 1 \}$ and the map $\widehat{\Tsel}$ sending $\Delta_p$ to itself:
\begin{align*}
\widehat{\Tsel}(X) \coloneqq \frac{1}{2} \Big[ \frac{1}{\langle I_n^p,\Tsel(X)\rangle } \Tsel(X) + X \Big] \,.
\end{align*}
Observe that, independently of the selection $\sqcup$, a fixed point $X \in \Delta_p$ of the map $\widehat{\Tsel}$ yields an eigenvector for the map $\Tsel$ associated with the eigenvalue $\langle I_n^p,\Tsel(X)\rangle$.
We can now state the theorem.

\begin{theorem}\label{th-tsel}
If the set of matrices $\{ E_{ij} \otimes A_\sigma \colon i \cdot \sigma = j\}$ is irreducible,
then the map $\widehat{\Tsel}$ 
has a positive definite fixed point.
\end{theorem}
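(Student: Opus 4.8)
\begin{skproof}
The plan is to obtain a fixed point of the single-valued map $\widehat{\Tsel}$ by Brouwer's theorem on the compact convex set $\Delta_p$, and then to use irreducibility to show that this fixed point lies in the interior of the cone, i.e.\ is positive definite. The set $\Delta_p$ is the intersection of the closed cone $(\Snp)^p$ with the affine hyperplane $\langle I_n^p, X\rangle = 1$, hence compact and convex. For $X \in \Delta_p$ each block $\Tsel_j(X)$ is positive semidefinite, and the averaging with $X$ together with the normalisation by $\lambda(X) \coloneqq \langle I_n^p, \Tsel(X)\rangle$ gives $\langle I_n^p, \widehat{\Tsel}(X)\rangle = 1$, so that $\widehat{\Tsel}$ maps $\Delta_p$ into itself.

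The main obstacle is to prove that $\widehat{\Tsel}$ is continuous on all of $\Delta_p$, including the boundary of the cone. On tuples of positive definite matrices, continuity of the selection $\sqcup_{\det}$ follows from its variational description: $\sqcup_{\det}\Qs = (X^*)^{-1}$, where $X^*$ is the unique maximiser of the strictly concave functional $X \mapsto \log\det X$ over the convex set $\{X \succ 0 \colon X \preceq Q_i^{-1}\ \text{for all}\ i\}$. Strict concavity yields uniqueness, and Berge's maximum theorem yields continuous dependence of $X^*$, hence of $(X^*)^{-1}$, on $\Qs$. One must also check that $\lambda(X) > 0$ on $\Delta_p$: it vanishes only if $A_\sigma^\top X_i A_\sigma = 0$ for all admissible $(i,\sigma)$, i.e.\ $\operatorname{range}(A_\sigma) \subseteq \ker X_i$ for all $i,\sigma$, which produces a proper invariant block subspace of the lifted family (see the last step) and is excluded by irreducibility; being continuous and positive on the compact set $\Delta_p$, $\lambda$ is then bounded away from $0$. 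The delicate point is the boundary, where some $A_\sigma^\top X_i A_\sigma$ degenerate; there I would argue that $\sqcup_{\det}$ still extends continuously, using irreducibility to keep the enclosing ellipsoid full-dimensional.

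Granting continuity, Brouwer's theorem provides a fixed point $X^* \in \Delta_p$. As noted before the statement, a fixed point of $\widehat{\Tsel}$ is an eigenvector of $\Tsel$: with $\lambda \coloneqq \lambda(X^*) > 0$ we have $\Tsel_j(X^*) = \lambda X_j^*$ for every $j$, and since $\Tsel_j(X^*)$ is an upper bound of the transported blocks, $\lambda X_j^* \succeq A_\sigma^\top X_i^* A_\sigma \succeq 0$ whenever $i\cdot\sigma = j$.

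Finally I would carry out the Perron--Frobenius step. From $\lambda X_j^* \succeq A_\sigma^\top X_i^* A_\sigma \succeq 0$ and the fact that $P \succeq Q \succeq 0$ forces $\ker P \subseteq \ker Q$, and using $\lambda > 0$, I obtain $\ker X_j^* \subseteq \ker(A_\sigma^\top X_i^* A_\sigma) = A_\sigma^{-1}(\ker X_i^*)$, that is $A_\sigma(\ker X_j^*) \subseteq \ker X_i^*$ for every admissible $(i,\sigma,j)$. Writing $K_j \coloneqq \ker X_j^*$, the block subspace $\bigoplus_j (e_j \otimes K_j)$ is therefore invariant under each lifted matrix $E_{ij}\otimes A_\sigma$ with $i\cdot\sigma = j$. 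This subspace is proper, since $\langle I_n^p, X^*\rangle = 1$ prevents all the $X_j^*$ from vanishing simultaneously; by irreducibility it must then equal $\{0\}$, so every $K_j = \{0\}$ and every $X_j^*$ is positive definite.
\end{skproof}
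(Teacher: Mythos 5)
Your overall architecture (Brouwer on $\Delta_p$, then a Perron--Frobenius kernel argument to upgrade the fixed point to a positive definite one) is close in spirit to the paper's, and your final step --- deducing from $\lambda X_j^* \succeq A_\sigma^\top X_i^* A_\sigma$ that the block subspace built from the kernels is invariant under the lifted family, hence trivial by irreducibility --- is a correct and clean way to get positive definiteness once a fixed point with $\lambda>0$ is in hand. The genuine gap is the step you yourself flag as ``delicate'': the continuity of $\widehat{\Tsel}$ on the boundary of $\Delta_p$. The selection $\sqcup_{\det}$ is only defined, via $\max \log\det X$ over $\{X \succ 0 : X \preceq Q_i^{-1}\}$, when all the $Q_i$ are positive definite; on the boundary of the cone (and already whenever some $A_\sigma$ is singular) the matrices $A_\sigma^\top X_i A_\sigma$ degenerate, and the paper explicitly records (in the remark following Corollary~\ref{cor-eigen}) that these maps \emph{do not have continuous extensions to the closure of the cone on which they act}. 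Your hope that ``irreducibility keeps the enclosing ellipsoid full-dimensional'' does not repair this: irreducibility is a property of the matrix family, not of a given boundary point $X$, and it does not prevent all the $A_\sigma^\top X_i A_\sigma$ feeding into a given coordinate $j$ from being simultaneously singular there. So Brouwer cannot be applied on all of $\Delta_p$ as you propose.

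The paper circumvents exactly this obstruction by a different route: it first establishes the two-sided comparison $p^{-1}\sum_k Q_k \preceq \sqcup_{\det}\Qs \preceq \sum_k Q_k$ (from~\cite[Theorem 4.1]{ACM}), which sandwiches $\widehat{\Tsel}$ between multiples of the \emph{linear} completely positive map; irreducibility then yields an integer $q$ such that the $q$-th iterate of $\widehat{\Tsel}$ sends $\Delta_p$ into its interior, from which one constructs a compact convex set $K$ contained in the interior of $\Delta_p$ and invariant under $\widehat{\Tsel}$. On $K$ all blocks are positive definite, $\sqcup_{\det}$ (hence $\widehat{\Tsel}$) is continuous, and Brouwer applies there; positive definiteness of the fixed point is then automatic, making your kernel argument unnecessary. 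To salvage your write-up you would need to replace the claimed boundary extension by this (or an equivalent) construction of an invariant compact set in the interior.
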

\begin{skproof}
The proof relies on the inequality $p^{-1} \sum_{1\leq k\leq p} Q_k \preceq \sqcup_{\det} Q_k \preceq \sum_{1\leq k\leq p} Q_k$
which can be deduced from~\cite[Theorem 4.1]{ACM} when the set $\A$ is reduced to two matrices.
The irreducibility of $\A$ implies that there is an integer $q$ such that the map $\widehat{\Tsel}$ iterated $q$ times sends $\Delta_p$ into its interior. 
We then show the existence of a convex compact set $K$ included in the interior of $\Delta$ which is invariant by $\widehat{\Tsel}$. 
The operator $(Q_1,\dots,Q_p) \mapsto \sqcup_{\det} \{Q_1,\dots,Q_p\}$
is continuous
on the interior of $\Delta$, hence, $\widehat{\Tsel}$ is continuous
on $K$.  We conclude by applying Brouwer's fixed-point theorem
to $\widehat{\Tsel}$. 
\end{skproof}
We obtain as an immediate corollary:
\begin{corollary}\label{cor-eigen}
If the set of matrices $\{ E_{ij} \otimes A_\sigma \colon i \cdot \sigma = j\}$ is irreducible, then, the tropical
Kraus map $T$ has a positive definite eigenvector.
\end{corollary}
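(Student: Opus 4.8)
The plan is to derive \Cref{cor-eigen} directly from \Cref{th-tsel} by unwinding the two layers of construction that separate the selected map $\Tsel$ from the full multivalued map $T$. First I would invoke \Cref{th-tsel}: since the lifted set $\{E_{ij}\otimes A_\sigma : i\cdot\sigma=j\}$ is assumed irreducible, the map $\widehat{\Tsel}$ has a positive definite fixed point $X^\star=(X_1^\star,\dots,X_p^\star)\in\Delta_p$, where the selection $\sqcup$ is taken to be $\sqcup_{\det}$. The first step is then to translate this fixed point of $\widehat{\Tsel}$ into an eigenvector of $\Tsel$. This translation is exactly the observation already recorded in the paragraph preceding \Cref{th-tsel}: if $\widehat{\Tsel}(X^\star)=X^\star$, then setting $\lambda\coloneqq\langle I_n^p,\Tsel(X^\star)\rangle$ we obtain $\lambda X^\star=\Tsel(X^\star)$, so that $X^\star$ is a non-linear eigenvector of $\Tsel$ with eigenvalue $\lambda$. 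I would spell out this short algebraic manipulation, noting that $\lambda>0$ because $\Tsel(X^\star)$ is a tuple of nonzero positive semidefinite matrices (each block being an upper bound of conjugates of the positive definite $X_i^\star$), and that positive definiteness of $X^\star$ is inherited from \Cref{th-tsel}.

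The second and final step is to observe that an eigenvector of the \emph{single-valued} selected map $\Tsel$ is automatically an eigenvector of the \emph{multivalued} map $T$. This is immediate from the definitions: for each $j$, $\Tsel_j(X^\star)=\sqcup_{\det}\{A_\sigma^\top X_i^\star A_\sigma : i\cdot\sigma=j\}$ is by construction one particular element of the set $\mub\{A_\sigma^\top X_i^\star A_\sigma : i\cdot\sigma=j\}=T_j(X^\star)$, since any selection of a minimal upper bound lies in the set of all minimal upper bounds. Hence from $\lambda X_j^\star=\Tsel_j(X^\star)\in T_j(X^\star)$ for every $j\in\W$, we conclude $\lambda X^\star\in T(X^\star)$ in the sense defined at the start of \Cref{sec:trop_iter}. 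This exhibits the required positive definite eigenvector of $T$.

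Because both steps are essentially definitional once \Cref{th-tsel} is granted, I do not expect a genuine obstacle in the corollary itself; the real content has already been discharged in the theorem. The only point requiring a word of care is to confirm that $X^\star$, being positive definite as a tuple, legitimately serves as a ``positive definite eigenvector'' in the sense intended by the statement, and that the selection $\sqcup_{\det}$ is indeed well-defined at $X^\star$ — which it is, precisely because $X^\star$ lies in the interior of $\Delta_p$ where all blocks are positive definite, so each conjugate $A_\sigma^\top X_i^\star A_\sigma$ is positive semidefinite and the minimum-volume selection of \Cref{sec:mub} applies. I would therefore present the argument as a two-line deduction, emphasizing that the corollary is ``immediate'' exactly because the containment $\Tsel(X^\star)\in T(X^\star)$ holds tautologically for any selection $\sqcup$.
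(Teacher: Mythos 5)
Your argument is correct and is exactly the route the paper intends: the paper derives \Cref{cor-eigen} as an ``immediate corollary'' of \Cref{th-tsel}, using precisely the observation stated before that theorem (a fixed point of $\widehat{\Tsel}$ is an eigenvector of $\Tsel$ with eigenvalue $\langle I_n^p,\Tsel(X)\rangle$) together with the tautological containment $\Tsel(X)\in T(X)$ for any selection. Your additional remarks on the positivity of $\lambda$ and the well-definedness of $\sqcup_{\det}$ at the fixed point are sound and merely make explicit what the paper leaves implicit.
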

\begin{remark}
Several basic methods allow one to prove non-linear extensions of the Perron-Frobenius
theorem. These involve contraction properties with respect to Hilbert's projective metric, Brouwer fixed point theorem, or monotonicity properties,
see~\cite{lemmensnussbaum}. A direct application of all these methods %
fails in the case of the tropical Kraus maps $\Tsel$, which are not contracting,
not monotone, and which do not have continuous extensions
to the closure of the cone on which they act. This is why the proof of 
\Cref{cor-eigen} relies on the detour through $\widehat{\Tsel}$ in \Cref{th-tsel}.
\end{remark}

In order to compute a fixed point of the map $\widehat{\Tsel}$, we compute successive iterates starting from a positive definite matrix $X^{(0)}$. 
This yields a Krasnoselskii-Mann-type scheme:
\begin{align*}
X^{(k+1)} =  \frac{1}{2} \Big[ \frac{1}{\langle I_n^p,\Tsel(X^{(k)})\rangle } \Tsel(X^{(k)}) + X^{(k)} \Big] \,.
\end{align*}
This is a power-type iteration, involving a renormalization
and a ``damping term'' (addition of $X^{(k)}$) to avoid oscillations.
This should be compared with the classical Krasnoselskii-Mann iteration,
which applies to non-expansive mappings $T$, and takes
the form $X^{(k+1)}=(T(X^{(k)})+X^{(k)})/2$, see~\cite{kmconvergence}.

\begin{remark}
There is a multiplicative variant of the iteration, defined by
\begin{align*}
X^{(k+1)} =  \Big[ \frac{\Tsel(X^{(k)})}{\langle I_n^p,\Tsel(X^{(k)})\rangle }  \Big] \# X^{(k)}\,,
\end{align*}
where $P \# Q \coloneqq P^{1/2}\big( P^{-1/2} Q P^{-1/2} \big)^{1/2} P^{1/2}$ denotes the \emph{Riemannian barycenter} of the positive definite matrices $P,Q$, see~\cite[Chapter 2]{bhatiaPDM} for more information. 
We can show that this multiplicative version does converge in the ``commutative case'', i.e., when $n = 1$. Then, the map $X \mapsto {\langle I_n^p,\Tsel(X)\rangle }^{-1}{\Tsel(X)}$ is nonexpansive in the Hilbert metric~\cite{lemmensnussbaum}, and then, the general result of~\cite{kmconvergence} can be applied. The additive version can also be shown to be converging when $n=1$, by a reduction to the same result, but the proof is more involved. 
\end{remark}

We use a different iteration scheme to compute fixed points of the variant $M_\tau$. 
First, we specialize again the multivalued map $M_\tau$ with a minimal upper bound selection $\sqcup$ to obtain the map $M_\tau^{\text{sel}}$. Then, we compute iteratively
\begin{align}
\label{eq:mc_iter}
X^{(k+1)} =  M_\tau^{\text{sel}}(X^{(k)})  \,.
\end{align}
We can show that this iteration converges on some "good" instances of the problem, when we choose $\sqcup \coloneqq \sqcup_{\det}$.
Indeed, 
the (indefinite) Riccati flow is a contraction in the Thompson metric, with contraction rate $\alpha > 0$ 
determined by the parameters of the flow~\cite[Corollary 4.7]{gaubertqu}.
Moreover, as stated by Allamigeon et al.~\cite{ACM}, the selection $\sqcup_{\det}$ has a Lipschitz constant in the Thompson metric which is not larger than $1+(4/\pi) \log n$ (this bound is conservative). Combining these results, the iteration in~\Cref{eq:mc_iter} is guaranteed to converge locally when $\exp(\alpha \tau) > 1+(4/\pi) \log n$.
The contraction rate $\alpha$ depends on an interval $\{ X \in \Sn \colon \lambda_1 I_n \preceq X \preceq \lambda_2 I_n \}$ and this interval is not preserved by $\sqcup_{\det}$, hence we do not have global convergence. Determining whether there is a minimal upper bound selection that preserves this interval and that has a finite Lipschitz constant in Thompson's metric, to be used instead of $\sqcup_{\det}$ remains an open problem.


\subsection{Implementation issues}

We describe in this section the resolution to several issues that arise in the implementation of the iterative scheme.

First, 
in the iterative scheme to approximate the joint spectral radius, we introduce a small positive perturbation $\varepsilon$ in the computation:
\begin{align*}
\Tsel_j(X) \coloneqq
 \bigsqcup
  \Big\{ A_\sigma ^\top  X_i A_\sigma + \varepsilon I_n \colon i \cdot \sigma = j \Big\} \,.
  \end{align*}
In practice, we use values for $\varepsilon$ in the range $10^{-4}-10^{-2}$. This additional parameter allows us to obtain,
in a finite number of iterations,
a solution $(X,\rho)$ that satisfies $\rho^2 X_j \succeq A_\sigma^\top  X_i A_\sigma$ for all admissible $(i,\sigma,j)$.
Moreover, this parameter absorbs numerical imprecisions that may appear during the computation and ensures that the matrices $X_j$ are positive definite, so the assumptions of~\Cref{thm:fix_jsr} and~\Cref{thm:fix_hjb} are satisfied. 

We choose the selection $\sqcup \coloneqq \sqcup_{\text{tr}}$. Then,
when the set $\Sigma$ contains only two elements, $\Tsel_j(X)$ can be computed
analytically thanks to~\Cref{thm:mub_char}. When $\Sigma$ has more than
two elements,
instead of computing the true minimal upper bound $\sqcup_{\tr} \Qs$, we compute an approximation by sequential evaluation: $Q_1 \sqcup_{\tr} ( Q_2 \sqcup_{\tr} ( \dots \sqcup_{\tr} Q_p ))$. 

Finally, as pointed out in~\cite{qucdc}, the propagation of the Riccati operator $\ricc_{\tau,\sigma}$ on a single quadratic form $P_0$ is computed analytically by $\ricc_{\tau,\sigma} = Y(\tau)X(\tau)^{-1}$, with $\mathcal{M}^\sigma =  \begin{psmallmatrix}
-A^\sigma & -Q^\sigma \\
D^\sigma & A^\sigma
\end{psmallmatrix}$
and
$
\big(X(\tau) \,;\, Y(\tau)\big)^\top 
= \exp \big( \mathcal{M}^\sigma \tau)
\big(
I_n \,;\,
P_0
\big)^\top 
$.

\section{Experimental results}\label{sec-exp}

\subsection{Path-complete graph Lyapunov functions}
\label{sec:pathcomplete}

In~\cite{pathcomplete}, Ahmadi and al.\ developed a method to compute an overapproximation of the joint spectral radius of a finite set of matrices,
to which we shall compare our method.

Given a set of states $\W$ and an alphabet $\Sigma$, an \emph{edge} of a \emph{labeled graph} is a triple $(i,\sigma,j) \in \W \times \Sigma \times \W$. The set of edges is denoted $E$. Such a graph is called \emph{path-complete} if for every state $i$ and letter $\sigma$, there is some state $j$ such that $(i,\sigma,j)$ is an edge.

Let $\A = \{ A_\sigma\}_{\sigma \in \Sigma}$ denote a finite set of $n \times n$ matrices and $\rho$ a non-negative real number.
In~\cite{pathcomplete}, the authors examine graphs, denoted $\G(X,\rho)$, whose states are positive definite matrices $\{X_i\}_i$ and whose edges are determined by
\begin{align*}
(i,\sigma,j) \in E \iff A_\sigma^\top  X_i A_\sigma \preceq \rho^2 X_j \,.
\end{align*}

The main theorem in~\cite{pathcomplete} shows that the construction of a path-complete graph $\G(X,\rho)$ gives an upper bound of the joint spectral radius:
\begin{theorem}[Theorem 2.4~\cite{pathcomplete}]
If the graph $\G(X,\rho)$ is path-complete for some set of positive definite matrices $\{X_i\}_i$, then $\rho(\A) \leq \rho$.
Moreover, the map $V \colon z \mapsto \max_i z^\top X_i z$ is a Lyapunov-type function: it satisfies $V(A_\sigma x) \leq \rho^2 V(x)$ for all $\sigma \in \Sigma$ and $x \in \R^n$.
\end{theorem}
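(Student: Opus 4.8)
The plan is to establish the Lyapunov inequality first, and then to read off the bound on the joint spectral radius as a consequence. For the Lyapunov property, I would fix $\sigma \in \Sigma$ and $x \in \R^n$, and let $i^\star$ be an index attaining the maximum in $V(A_\sigma x) = \max_i (A_\sigma x)^\top X_i (A_\sigma x)$. Applying path-completeness to the source state $i^\star$ and the letter $\sigma$ produces a target state $j^\star$ with $(i^\star,\sigma,j^\star) \in E$, that is, $A_\sigma^\top X_{i^\star} A_\sigma \preceq \rho^2 X_{j^\star}$. Evaluating this L\"owner inequality at $x$ and bounding $x^\top X_{j^\star} x$ by $\max_j x^\top X_j x = V(x)$ gives
\[
V(A_\sigma x) = x^\top A_\sigma^\top X_{i^\star} A_\sigma x \leq \rho^2\, x^\top X_{j^\star} x \leq \rho^2 V(x)\,,
\]
which is exactly the claimed inequality.

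Next I would record that $v \coloneqq z \mapsto V(z)^{1/2} = \max_i \|X_i^{1/2} z\|$ is a genuine norm on $\R^n$, being a finite pointwise maximum of the ellipsoidal norms $z \mapsto \|X_i^{1/2} z\|$ (with $\|\cdot\|$ the Euclidean norm), each of which is a norm because $X_i$ is positive definite. In terms of $v$ the Lyapunov inequality reads $v(A_\sigma z) \leq \rho\, v(z)$ for all $\sigma$ and $z$, and iterating it along an arbitrary word yields $v(A_{i_1}\cdots A_{i_k} z) \leq \rho^k v(z)$; equivalently, the operator norm of $A_{i_1}\cdots A_{i_k}$ induced by $v$ is at most $\rho^k$.

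To conclude I would invoke the equivalence of norms on $\R^n$: there exist constants $0 < c \leq C$ with $c\|z\| \leq v(z) \leq C\|z\|$ for the Euclidean norm $\|\cdot\|$. This propagates to the induced operator norms as $\|A_{i_1}\cdots A_{i_k}\| \leq (C/c)\,\rho^k$ for every word of length $k$, so that $\max_{1\leq i_1,\dots,i_k\leq m} \|A_{i_1}\cdots A_{i_k}\|^{1/k} \leq (C/c)^{1/k}\rho$; letting $k \to +\infty$ in the definition of $\rho(\A)$ gives $\rho(\A) \leq \rho$.

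I do not anticipate a real difficulty in this argument, which is essentially the one underlying \Cref{thm:fix_jsr}. The only point demanding care is the order of quantifiers in the path-complete condition together with the direction of the L\"owner inequality: the target state $j^\star$ must be chosen \emph{after} the maximizing source index $i^\star$ has been fixed, so that path-completeness is used exactly once per source state, and the bound $x^\top X_{j^\star} x \leq V(x)$ is applied only at the very end.
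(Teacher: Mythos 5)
Your proof is correct and complete. The paper does not actually prove this statement itself---it is imported verbatim as Theorem~2.4 of~\cite{pathcomplete}---and your argument (apply path-completeness to the index attaining the maximum in $V(A_\sigma x)$ to obtain the Lyapunov inequality, then convert the decay of the norm $v=V^{1/2}$ along arbitrary products into $\rho(\A)\leq\rho$ via equivalence of norms) is exactly the standard one underlying the cited result and the way the paper invokes it in \Cref{thm:fix_jsr}.
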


In practice, for a fixed value of $\rho$ and a given path-complete graph $\G$, checking the existence of a path-complete graph $\G(X,\rho)$ whose edges coincide with $\G$ amounts to checking the feasibility of an LMI.
A bisection scheme is then implemented to refine $\rho$.
For brevity, we shall refer to this method as the LMI method.

A class of graphs which provides good theoretical and experimental approximations is the class of \emph{De Bruijn} graphs. The set of states of the De Bruijn graph of order $d$ is the set $\Sigma^d$ of words built on $\Sigma$ which have length $d$. There is an edge $(i,\sigma,j)$ between states $i$ and $j$ if and only if $i = \sigma_1\dots \sigma_d$ and $j = \sigma_2\dots \sigma_d \sigma$. This graph, denoted by $D_d$, is path-complete by construction.

\subsection{McEneaney's curse of dimensionality attenuation scheme}

We assume that $V^0$ is a quadratic function $V^0(x) = x^\top P^0x$.
The method of~\cite{mceneaney07} that solves the linear quadratic optimal control problem described in~\Cref{sec:clas_hjb} approximates
the value function $V$ 
by a finite supremum of quadratic forms
\begin{align}\label{eq-bill}
V \approx \sup_{\sigma_1,\dots,\sigma_N \in \Sigma}
 S_\tau^{\sigma_1} \cdots S_\tau^{\sigma_N}[V^0] \,,
\end{align}
where $\tau$ is a (small) time discretization step and $N$ is a maximal
number of switches. The latter supremum represents the value of a modified optimal control problem, in horizon
$\tau N$, in which switches occur only at times multiple
of $\tau$. 
We have $S_\tau^{\sigma_1} \cdots S_\tau^{\sigma_N}[V^0](x)
= x^\top  Q x$, where $Q= \ricc_{\tau,\sigma_1}\circ\dots \circ \ricc_{\tau,\sigma_N} (P_0)$, can be computed by integrating successive Riccati equations, which allows
us to evaluate the expression in~\Cref{eq-bill}.

The propagation of a quadratic form by the Lax-Oleinik semi-group has only a cubic cost in terms of the dimension $n$, contrary to classical grid-based methods whose cost is exponential in the dimension.
In this sense, the curse of dimensionality has been reduced.
 However, the memory footprint of this method is exponential in the number of switches, since $m^N$ quadratic forms are computed after $N$ iterations.
Several pruning schemes have been proposed in~\cite{qucdc} to limit this growth.
  This is a costly operation, indeed,
$99\%$ of the computation time is spent solving LMIs inside the pruning procedure~\cite{qucdc}.

\subsection{Application to the joint spectral radius}

Given that the approximation of the joint spectral radius $\rho(\A)$ depends on the graph $\G$ that underlies the analysis, we denote by $\rhot(\A, \G)$ the approximation obtained as (the square root of) an eigenvalue of a tropical Kraus map and by $\rho(\A,\G)$ the one obtained by solving LMIs.

The map $\cdot$ sending $\W \times \Sigma$ to $\W$ defined in~\Cref{sec:trop_kraus_def} can be interpreted as a path-complete graph. For this reason, our method, when applied to the joint spectral radius, is a relaxation of the path-complete Lyapunov function framework, and thus we always have
\begin{align*}
\rho(\A) \leq \rho(\A,\G) \leq \rhot(\A,\G) \,.
\end{align*}

However, we shall see that the tropical method is much more tractable, so we may use a bigger graph and sometimes get a better approximation than by solving LMIs.

We compare the performance of our algorithm with the path-complete graph Lyapunov method, in terms of computation time and accuracy of the approximation of the joint spectral radius, measured by
\begin{align*}
\delta \coloneqq (\rhot(\A,\G') - \rho(\A,\G) / \rho(\A,\G) \,.
\end{align*}

All the experiments were implemented in Matlab, running on one core of a $2.2$ GHz Intel Core i7 with $8$ GB RAM. The semidefinite programs were solved using YALMIP (R$20160930$), calling SeDuMi $1.3$.

\subsubsection{Accuracy of the approximation}


We generate $600$ pairs $\A = \{A_1, A_2\}$ of random $6 \times 6$ matrices. For each of these pairs, we compare the approximation of the joint spectral radius obtained by the LMI method on the graph $D_3$  (involving $8$ positive semidefinite matrices) and by the tropical Kraus method on the graph $D_6$ (involving $64$ positive semidefinite matrices).
On these examples, 
we report that the tropical method obtains 
a similar approximation of the joint spectral radius, within a margin of $2.5\%$, and outperforms the LMI-method on $25\%$ of these examples.
Moreover, whereas the LMI-method requires between $3$s and $5$s to obtain this approximation, the tropical method consistently returns an approximation in $1$s.

\subsubsection{Scalability - dimension}

We generate random pairs of $n \times n$ matrices, for $n$ ranging from $5$ to $500$. We use again the De Bruijn graph $D_3$ in the LMI-method and the graph $D_6$ in the tropical Kraus method. We show in~\Cref{tab:scal} the mean computation time required to obtain an overapproximation and the mean relative accuracy of the tropical method with respect to the LMI-method, when it applies.
First, one can observe the major speedup provided by the tropical method, from $4$ times faster when $n = 5$ to $80$ times faster for $n = 40$.

Also note that the tropical method is using $8$ times more quadratic forms in its analysis and remains much faster than the LMI-method. Thus, given a fixed time budget, the tropical method enjoys more flexibility regarding the size of the graph that is used in the analysis.

Moreover, observe that the LMI-method cannot provide estimates on the joint spectral radius for values of $n$ greater than $45$, whereas the tropical method easily reaches values of $n$ greater than $100$.

Finally, the accuracy of the tropical approximation remains within a $3\%$ margin of the one obtained by the LMI-method.


\begin{table}[t]
\caption{Comparison of the methods with respect to the size of the dimension of the matrices.}
\label{tab:scal}
\begin{center}
\begin{tabular}{c|c|c|c|c}
\begin{tabular}{c}
Dimension \\ $n$
\end{tabular} &
\begin{tabular}{c}
CPU time \\ (tropical)
\end{tabular} &
\begin{tabular}{c}
CPU time \\ (LMI)
\end{tabular} &
\begin{tabular}{c}
Upper bound \\  on  $\rho(\A)$  \\(tropical)
\end{tabular} &
\begin{tabular}{c}
Upper bound \\  on   $\rho(\A)$ \\ (LMI) 
\end{tabular}
\\
\hline 
$5$ &
$0.9$ s& 
$3.1$ s&
$2.767$ &
$2.7627$
\\
\hline 
$10$ &
$1.5$ s&
$4.2$ s&
$3.797$ &
$3.7426$ 
\\ 
\hline
$20$ &
$3.5$ s&
$31$ s&
$5.4093$ &
$5.3891$ 
\\ 
\hline
$30$ &
$7.9$ s&
$3$min &
$6.2038$ &
$6.1942$ 
\\ 
\hline
$40$ &
$13.7$ s&
$18$min & 
$7.3402$ &
$7.3363$ 
\\ 
\hline
$45$ &
$18.1$ s&
$-$ &
$7.687$ &
$-$ 
\\ 
\hline
$50$ &
$25.2$ s&
$-$ &
$8.1591$ &
$-$ 
\\ 
\hline
$100$ &
$1$min &
$-$ &
$11.487$ &
$-$ 
\\ 
\hline
$500$ &
$8$min &
$-$ &
$25.44$ &
$-$ 
\end{tabular}
\end{center}
\end{table}

\subsubsection{Scalability - graphs}

We now analyze the influence of the order of the De Bruijn graph $D_d$ used in the analysis on the computation of the upper bound on the joint spectral radius obtained by both methods. 
We use the matrices
$
A_1 = \begin{psmallmatrix*}[r]
-1 & 1 & -1 \\
-1 & -1 & 1 \\
 0 & 1 & 1
\end{psmallmatrix*}
$ and $
A_2 = \begin{psmallmatrix*}[r]
-1 & 1 & -1 \\
-1 & -1 & 0 \\
 1 & 1 & 1
\end{psmallmatrix*}$, introduced in~\cite{Guglielmi2014}. Their joint spectral radius is $\rho(\A) = 1.78893$.

We show in~\Cref{tab:scal_g} the upper bound on the joint spectral radius and the computation time with respect to the length order $d$ of the De Bruijn graphs.

\begin{table}
\caption{Comparison of the methods w.r.t. the size of the graph $D_d$.}
\label{tab:scal_g}
\begin{center}
\begin{tabular}{c|c|c|c|c|c}
Order $d$ &
$2$ &
$4$ &
$6$ &
$8$ &
$10$ \\
\hline
Size of $\W$ &
$8$ &
$32$ &
$128$ &
$512$ &
$2048$ \\
\hline
\begin{tabular}{c}
CPU time  \\
 (tropical) 
\end{tabular} &
$0.03$s &
$0.07$s &
$0.4$s &
$2.0$s &
$9.0$s \\
\hline
\begin{tabular}{c}
CPU time  \\
 (LMI) 
\end{tabular} &
$1.9$s &
$4.0$s &
$24$s &
$1$min &
$10$min \\
\hline
\begin{tabular}{c}
Upper bound on  \\
$\rho(\A)$ (tropical) 
\end{tabular} &
$1.842$ &
$1.821$ &
$1.804$ &
$1.800$ &
$1.801$ \\
\hline
\begin{tabular}{c}
Upper bound on  \\
$\rho(\A)$ (LMI) 
\end{tabular} &
$ 1.8216$ &
$ 1.7974$ &
$ 1.7957$ &
$ 1.7922$ &
$ 1.7905$ \\
\end{tabular}
\end{center}
\end{table}

\subsection{A faster curse of dimensionality attenuation scheme}

We now apply the iteration scheme described in~\Cref{sec:trop_iter} to the approximation of the value function $V$.
In all examples, we measure the quality of the approximation of the value function as in~\cite{mceneaney09, qucdc} with the H-infinity back-substitution error $\max_{x^\top x \leq 1} | H(x, \nabla V(x)) |$ 
on the subspace spanned by the canonical vectors $e_1$ and $e_2$.

The first example is Example $1$ in~\cite{mceneaney07} and we use the instance of~\cite{qucdc} in the second example.  Examples $3$ and $4$ are randomly generated examples that satisfy the technical assumptions in~\cite{mceneaney07}.

\Cref{tab:mc} depicts the results of the computations. In particular, we give the backsubstitution error at the beginning of the computation, when the value function is approximated by a single quadratic form ($Q(x) = 0.1 |x|^2$ in all cases) and the final backsubstitution error when the scheme has converged.

\begin{table}[t]
\caption{Numerical benchmarks of the tropical Kraus method applied to McEneaney's switched linear quadratic problem}
\label{tab:mc}
\begin{center}
\begin{tabular}{c|c|c|c|c|c}
Example &
$1$ &
$2$ &
$2$ &
$3$ &
$4$ \\
\hline
Dimension &
$2$ &
$6$ &
$6$ &
$20$ &
$20$ \\
\hline
Size of $\Sigma$ &
$3$ &
$6$ &
$6$ &
$2$ &
$4$ \\
\hline
$\tau$ &
$0.05$ s &
$0.2$ &
$0.1$ &
$0.1$ &
$0.1$ \\
\hline
Size of $\W$ &
$81$ &
$216$ &
$1296$ &
$128$ &
$256$ \\
\hline
Initial error &
$0.78$ &
$1.12$ &
$1.12$ &
$4.2$&
$4.79$\\
\hline
Final error &
$0.047$ &
$0.071$ &
$0.090$ &
$0.0006$ &
$0.17$\\
\hline
Iterations &
$194$ &
$115$ &
$200$ &
$55$ &
$288$\\
\hline
CPU time &
$8$ s&
$41$ s&
$5$ min&
$5$ s &
$2.5$ min
\end{tabular}
\end{center}
\end{table}

\addtolength{\textheight}{-4cm}   

\section{CONCLUDING REMARKS}
We introduced a new method to approximate the value function of optimal control problems for switched systems. This method applies to situations in which the 
evolution semi-group of the unswitched problem preserves the space of quadratic forms. This includes the computation of the joint spectral radius
and a class of linear quadratic control problems with switches considered
by McEneaney. Our scheme belongs to the family of max-plus methods as it
approximates the value function by a supremum
of quadratic forms. It avoids the recourse to semidefinite programming
(which was the bottleneck of earlier max-plus methods) by a reduction
to a non-linear eigenproblem, exploiting the geometry of the L\"owner
order. This leads to a major speedup, allowing us to obtain approximate solutions of instances in dimension up to $100$ in the case of the joint spectral radius, and 20 for McEneaney's problem, hardly accessible by other methods.

Let us now point out the limitations of the present approach, together
with possible ways to overcome them. 

A key ingredient in
our method is the replacement of LMI formulations by a selection of
minimal upper bounds in the L\"owner order. This induces a 
``relaxation gap'', which is difficult to estimate as it depends
on the specific selection which is used. Another difficulty is
that these selections may be expansive, resulting in a potential
instability or lack of convergence of the iterative scheme. In the case of the joint spectral radius, experiments indicate that the scheme does converge
(although a proof of convergence in the general case is missing). In the case of switched linear quadratic control problems, we do have a proof of convergence
in the discrete time case. This proof requires the contraction rates of the Riccati flows arising in our problem to be sufficiently small to absorb the expansiveness of the selection of a joint. In practice, the scheme converges in
more general circumstances. However, in the continuous time case, the precision of the scheme becomes limited as it can be unstable for small values of the time discretization step. In other words, the scheme currently allows one to compute
quickly a coarse approximation of the solution of a Hamilton-Jacobi PDE.
The most promising improvement of the scheme may be to
adapt dynamically the selection of a minimal upper bound, which
will reduce the relaxation gap, and might also improve the convergence.
This is left for further work.

\section*{Acknowledgements}

We thank the reviewers for their helpful comments.




\bibliographystyle{plain}
\bibliography{cdc17,bibliozheng}

\begin{thebibliography}{10}

\bibitem{pathcomplete}
A.~A. Ahmadi, R.~M. Jungers, P.~A. Parrilo, and M.~Roozbehani.
\newblock Joint spectral radius and path-complete graph lyapunov functions.
\newblock {\em {SIAM} J. Control and Optimization}, 52(1):687--717, 2014.

\bibitem{a6}
M.~Akian, S.~Gaubert, and A.~Lakhoua.
\newblock The max-plus finite element method for solving deterministic optimal
  control problems: basic properties and convergence analysis.
\newblock {\em SIAM J. Control Optim.}, 47(2):817--848, 2008.

\bibitem{ACM}
X.~Allamigeon, S.~Gaubert, N.~Stott, E.~Goubault, and S.~Putot.
\newblock A scalable algebraic method to infer quadratic invariants of switched
  systems.
\newblock {\em {ACM} Trans. Embedded Comput. Syst.}, 15(4):69:1--69:20, 2016.

\bibitem{Barabanov88}
N.~E. Barabanov.
\newblock Lyapunov indicator for discrete inclusions, {I--III}.
\newblock {\em Autom. Remote Control}, 49:152--157, 1988.

\bibitem{bhatiaPDM}
R.~Bhatia.
\newblock {\em Positive Definite Matrices}.
\newblock Princeton University Press, 2007.

\bibitem{zidani-bokanowski}
O.\ Bokanowski and H.~Zidani.
\newblock Anti-dissipative schemes for advection and application to
  {H}amilton-{J}acobi-{B}ellman equations.
\newblock {\em J. Sci. Compt}, 30(1):1--33, 2007.

\bibitem{branicky1998multiple}
M.~S. Branicky.
\newblock Multiple {L}yapunov functions and other analysis tools for switched
  and hybrid systems.
\newblock {\em Automatic Control, IEEE Transactions on}, 43(4):475--482, 1998.

\bibitem{capuzzodolcetta}
I.~Capuzzo~Dolcetta.
\newblock On a discrete approximation of the {H}amilton-{J}acobi equation of
  dynamic programming.
\newblock {\em Appl. Math. Optim.}, 10(4):367--377, 1983.

\bibitem{carlini-falcone-ferretti}
E.~Carlini, M.~Falcone, and R.~Ferretti.
\newblock An efficient algorithm for {H}amilton-{J}acobi equations in high
  dimension.
\newblock {\em Comput. Vis. Sci.}, 7(1):15--29, 2004.

\bibitem{crandall-lions}
M.~G. Crandall and P.-L. Lions.
\newblock Two approximations of solutions of {H}amilton-{J}acobi equations.
\newblock {\em Math. Comp.}, 43(167):1--19, 1984.

\bibitem{falcone-ferretti}
M.~Falcone and R.~Ferretti.
\newblock Discrete time high-order schemes for viscosity solutions of
  {H}amilton-{J}acobi-{B}ellman equations.
\newblock {\em Numer. Math.}, 67(3):315--344, 1994.

\bibitem{Farenick96}
D.~R. Farenick.
\newblock Irreducible positive linear maps on operator algebras.
\newblock {\em Proc. Amer. Math. Soc.}, 124(11):3381--3390, 1996.

\bibitem{a5}
W.~H. Fleming and W.~M. McEneaney.
\newblock A max-plus-based algorithm for a {H}amilton-{J}acobi-{B}ellman
  equation of nonlinear filtering.
\newblock {\em SIAM J. Control Optim.}, 38(3):683--710, 2000.

\bibitem{qucdc}
S.~Gaubert, W.~McEneaney, and Z.~Qu.
\newblock Curse of dimensionality reduction in max-plus based approximation
  methods: Theoretical estimates and improved pruning algorithms.
\newblock In {\em Decision and Control and European Control Conference
  (CDC-ECC), 2011 50th IEEE Conference on}, pages 1054--1061. IEEE, 2011.

\bibitem{gaubertqu}
S.~Gaubert and Z.~Qu.
\newblock The contraction rate in thompson's part metric of order-preserving
  flows on a cone – application to generalized riccati equations.
\newblock {\em Journal of Differential Equations}, 256(8):2902 -- 2948, 2014.

\bibitem{Guglielmi2014}
N.~Guglielmi and M.~Zennaro.
\newblock {\em Stability of Linear Problems: Joint Spectral Radius of Sets of
  Matrices}, pages 265--313.
\newblock Springer International Publishing, Cham, 2014.

\bibitem{Jun09}
R.~Jungers.
\newblock {\em {The joint spectral radius}}, volume 385 of {\em Lecture Notes
  in Control and Information Sciences}.
\newblock Springer-Verlag, Berlin, 2009.

\bibitem{Kadison}
R.~V. Kadison.
\newblock Order properties of bounded self-adjoint operators.
\newblock {\em Proceedings of the American Mathematical Society},
  2(3):505--510, 1951.

\bibitem{kaisemceneaney}
H.~Kaise and W.~M. McEneaney.
\newblock Idempotent expansions for continuous-time stochastic control.
\newblock {\em SIAM Journal on Control and Optimization}, 54(1):73--98, 2016.

\bibitem{Kozyakin10}
V.~Kozyakin.
\newblock Iterative building of barabanov norms and computation of the joint
  spectral radius for matrix sets.
\newblock {\em Discrete and Continuous Dynamical Systems -- Series B},
  14(1):143--158, 2010.

\bibitem{kraus}
K.~Kraus, A.~B{\"o}hm, J.~D. Dollard, and W.~H. Wootters, editors.
\newblock {\em The first Representation theorem}, pages 42--61.
\newblock Springer Berlin Heidelberg, Berlin, Heidelberg, 1983.

\bibitem{lemmensnussbaum}
B.~Lemmens and R.~D. Nussbaum.
\newblock {\em Non-linear {P}erron-{F}robenius theory}, volume 189 of {\em
  Cambridge Tracts in Mathematics}.
\newblock Cambridge University Press, 2012.

\bibitem{Liberzon03}
D.~Liberzon.
\newblock {\em Switching in Systems and Control}.
\newblock Systems and Control: Foundations and Applications. Birkhauser, 2003.

\bibitem{mceneaney07}
W.~M. McEneaney.
\newblock A curse-of-dimensionality-free numerical method for solution of
  certain {HJB PDE}s.
\newblock {\em SIAM journal on Control and Optimization}, 46(4):1239--1276,
  2007.

\bibitem{mceneaney09}
W.~M. McEneaney.
\newblock Convergence rate for a curse-of-dimensionality-free method for
  {H}amilton--{J}acobi--{B}ellman {PDE}s represented as maxima of quadratic
  forms.
\newblock {\em SIAM Journal on Control and Optimization}, 48(4):2651--2685,
  2009.

\bibitem{mceneaneydower}
W.~M. McEneaney and P.~M. Dower.
\newblock The principle of least action and fundamental solutions of
  mass-spring and n-body two-point boundary value problems.
\newblock {\em SIAM Journal on Control and Optimization}, 53(5):2898--2933,
  2015.

\bibitem{mccomplex}
W.~M. McEneaney and L.~J. Kluberg.
\newblock Convergence rate for a curse-of-dimensionality-free method for a
  class of {HJB} {PDE}s.
\newblock {\em SIAM J. Control Optim.}, 48(5):3052--3079, 2009/10.

\bibitem{qusico}
Z.~Qu.
\newblock Contraction of riccati flows applied to the convergence analysis of a
  max-plus curse-of-dimensionality--free method.
\newblock {\em SIAM Journal on Control and Optimization}, 52(5):2677--2706,
  2014.

\bibitem{kmconvergence}
S~Reich and AJ~Zaslavski.
\newblock Convergence of {K}rasnoselskii-{M}ann iterations of nonexpansive
  operators.
\newblock {\em Mathematical and Computer Modelling}, 32(11-13):1423--1431,
  2000.

\bibitem{sepulchre}
R.~Sepulchre, A.~Sarlette, and P.~Rouchon.
\newblock Consensus in noncommutative spaces.
\newblock In {\em {Proc. of the 49th IEEE Conference on Decision and Control}},
  pages 6596--6601, Atlanta, USA, Dec 2010.

\bibitem{eneaneyphys}
S.~Sridharan, M.~Gu, M.~R. James, and W.~M. McEneaney.
\newblock Reduced-complexity numerical method for optimal gate synthesis.
\newblock {\em Phys. Rev. A}, 82:042319, Oct 2010.

\bibitem{PAMS}
N.~Stott.
\newblock Maximal lower bounds in the {L}\"owner order.
\newblock 2016.

\bibitem{SunGe11}
Z.~Sun and S.~S. Ge.
\newblock {\em Stability Theory of Switched Dynamical Systems}.
\newblock Springer, 2011.

\end{thebibliography}

\end{document}